\documentclass[a4paper,centertags,osxzneside,12pt]{amsart}
\usepackage{amssymb}
\usepackage{mathrsfs}
\usepackage{appendix}
\usepackage{amssymb}
\usepackage{fancyhdr}
\usepackage{charter}
\usepackage{typearea}
\usepackage{pdfsync}
\usepackage{color}
\usepackage[colorlinks,linkcolor=blue,anchorcolor=blue,citecolor=green]{hyperref}
\usepackage{mathrsfs}
\usepackage[a4paper,top=3cm,bottom=3cm,left=3cm,right=3cm]{geometry}

\usepackage{enumitem}
\usepackage{color}
\setlist[1]{itemsep=5pt}
\newcommand{\comment}[1]{}

\makeatletter
      \def\@setcopyright{}
      \def\serieslogo@{}
      \makeatother

\newtheorem{theorem}{Theorem}[section]

\newtheorem{corollary}[theorem]{Corollary}

\newtheorem{remark}[theorem]{Remark}

\numberwithin{equation}{section}

\begin{document}
\title{CSC Bergman metrics are Einstein on  pseudoconvex domains}

\author{Xiaoshan Li}
\address{School of Mathematics and Statistics, Wuhan University, Wuhan, Hubei 430072, China.}
\email{xiaoshanli@whu.edu.cn}
\thanks{Xiaoshan Li is supported in part by NSFC (12361131577, 12271411)}

\author{Taishun Liu}
\address{Department of Mathematics, Huzhou normal University, Huzhou 313000, China.}
\email{lts@ustc.edu.cn}
\date{April 2026}

\maketitle
\begin{center}
\vspace{-8pt} \small \itshape
  Dedicated to Professor Xiaojun Huang, with  admiration for  his mathematical contributions
\vspace{-5pt} 
\end{center}

\begin{abstract}
Let $\Omega\subset\mathbb C^n$($n\geq 3$) be a  pseudoconvex domain
with a smooth strongly pseudoconvex boundary point.  We
prove that if the Bergman metric of $\Omega$ has constant scalar
curvature, then it is necessarily K\"ahler--Einstein. 
\end{abstract}

\section{csc Bergman metric}
Let \(\Omega\subset\mathbb C^n\) be a possibly unbounded pseudoconvex domain whose boundary \(\partial\Omega\) contains a smooth strongly pseudoconvex point \(p\). Let \(\{\phi_j\}\) be an orthonormal basis of the Bergman space \(A^2(\Omega)\), the subspace of \(L^2(\Omega)\) consisting of square integrable holomorphic functions on \(\Omega\). Write
$K_\Omega(z,z)=\sum_j|\phi_j(z)|^2$
for the Bergman kernel function on \(\Omega\), and let
\[
g_\Omega=\sum_{i,j}g_{i\overline j}\,dz_i\otimes d\overline z_j,\qquad
g_{i\overline j}=\frac{\partial^2}{\partial z_i\,\partial\overline z_j}\log K_\Omega(z,z)
\]
be the Bergman metric of \(\Omega\). This metric is well defined on a maximal open subset \(\Omega^*\subset\Omega\) that contains a one-sided neighborhood of \(p\) (see \cite{HJL25}). By a result of Huang--Li \cite{HLi26}, \(\Omega\setminus\Omega^*\) is contained in a complex hypersurface (possibly singular) of \(\Omega\).

Set \(G_\Omega=(g_{i\overline j})_{n\times n}\) and on $\Omega^*$ we define the Bergman invariant function
\[
J_\Omega=\frac{\det G_\Omega}{K_\Omega}.
\]
For the Bergman metric \(g_{\Omega}\), the Ricci curvature tensor is given by
\[
\mathrm{Ric}_{i\overline j}
=
-\frac{\partial^2}{\partial z^i\,\partial\overline z^j}
\log\det(G_{\Omega}),
\]
and the scalar curvature is the trace of the Ricci tensor with respect to the metric:
\[
S_\Omega
=\sum_{i, j}
g^{i\overline j}\mathrm{Ric}_{i\overline j}
=\sum_{i, j}
-g^{i\overline j}\frac{\partial^2}{\partial z^i\,\partial\overline z^j}
\log\det(G_\Omega).
\]
It is known that near a strongly pseudoconvex boundary point $p$ \cite{KYu96}, the scalar curvature \(S_\Omega\) and the Bergman invariant function have limits:
\[
\lim_{z\rightarrow p}S_{\Omega}(z)=-n,\quad \lim_{z\rightarrow p}J_{\Omega}(z)=c_n=\frac{(n+1)^n\pi^n}{n!}.
\]

Starting from the identity
\[
\log J_\Omega=\log\det G_\Omega-\log K_\Omega,
\]
and applying \(\partial_{z_k}\partial_{\overline z_j}\), then contracting with \(g^{k\overline j}\), we obtain that \(S_\Omega\) is constant if and only if \(\log J_\Omega\) is harmonic with respect to the Bergman metric. This was first derived by Sha (see \cite{S26}):
\[
\Delta_{g_\Omega}\log J_\Omega 
=
\sum_{j,k}g^{k\overline j}
\frac{\partial^2}{\partial z^k\,\partial\overline z^j}
\log J_\Omega
=
0
\qquad \text{on }\Omega^\ast.
\]

When the Bergman metric \(g_\Omega\) has constant scalar curvature on \(\Omega^*\), we say that \(\Omega\) admits a constant scalar curvature (csc) Bergman metric. Since this condition can be expressed as a real analytic equation on \(\Omega\), it follows from the uniqueness of real analytic functions that \(\Omega\) has a csc Bergman metric if and only if \(S_\Omega\) is constant on some nonempty open subset of \(\Omega^\ast\).

In this paper, we present a proof of the following result:
\begin{theorem} \label{main}Let $\Omega
\subset {\mathbb C}^n$ with $n\ge 3$ be a pseudoconvex domain with $p\in \partial\Omega$ a  smooth strongly pseudoconvex  boundary point of $\Omega$. If the Bergman metric of $\Omega$ has constant scalar curvature, then it is K\"ahler--Einstein.
\end{theorem}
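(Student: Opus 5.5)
The plan is to show that $J_\Omega$ is constant on $\Omega^\ast$. This is enough, because
\[
\mathrm{Ric}_{i\overline j}=-\partial_i\partial_{\overline j}\log\det G_\Omega=-g_{i\overline j}-\partial_i\partial_{\overline j}\log J_\Omega .
\]
If $J_\Omega$ is constant, then $\mathrm{Ric}=-g_\Omega$ and $g_\Omega$ is K\"ahler--Einstein. By Sha's identity, the hypothesis says that $h:=\log J_\Omega-\log c_n$ is $g_\Omega$-harmonic on $\Omega^\ast$. By real analyticity it suffices to prove $h\equiv0$ on a one-sided neighborhood $U$ of $p$. So the problem reduces to a boundary uniqueness statement for a harmonic function of a metric that is asymptotically complex hyperbolic near $p$.

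\textbf{Step 1: boundary expansion of $h$.} Choose a local defining function $r>0$ in $\Omega$ near $p$. I would use localization of the Bergman kernel at a smooth strongly pseudoconvex point of a pseudoconvex domain, comparing with a small strongly pseudoconvex subdomain $D\subset\Omega$ sharing the boundary near $p$. This gives Fefferman's expansion
\[
K_\Omega=\frac{\phi+\psi\log r}{r^{n+1}},\qquad \phi,\psi\in C^\infty(\overline U),\quad \phi|_{\partial\Omega}\neq0 .
\]
If $r$ is taken to be a Fefferman approximate solution of the complex Monge--Amp\`ere equation, the singular part of $\log K_\Omega$ is $-(n+1)\log r$ up to $O(r^{n+1}\log r)$ corrections. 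Substituting into $\det G_\Omega/K_\Omega$ should then give
\[
h=\sum_{k\ge n+1}\ \sum_{0\le l\le l(k)} a_{kl}\,r^{k}(\log r)^l ,
\]
with smooth tangential coefficients $a_{kl}$. In particular $h=O(r^{n+1}|\log r|)$.

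\textbf{Step 2: indicial analysis.} In the asymptotically complex hyperbolic geometry, $\Delta_{g_\Omega}$ acts on $a\,r^s(\log r)^l$ to leading order as $c\,s(s-n)\,a\,r^s(\log r)^l$, with $c>0$, plus lower powers of $\log r$ and higher-order terms in $r$. The indicial roots are therefore $0$ and $n$, and every exponent $k\ge n+1$ in the expansion of $h$ is non-indicial. I would insert the expansion into $\Delta_{g_\Omega}h=0$ and induct on $k$, and for fixed $k$ downward on $l$. At each stage the top coefficient is multiplied by the nonzero factor $k(k-n)$, so it must vanish. This shows that $h$ vanishes to infinite order at $\partial\Omega$ near $p$. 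I expect the hypothesis $n\ge3$ to enter here or in Step 1, through the structure of the leading log coefficient $\psi|_{\partial\Omega}$ and the first terms of the Fefferman/Graham expansion. In low dimension ($n=2$) those terms interact with the indicial root $n$ differently, and some coefficient there is not forced to vanish.

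\textbf{Step 3: unique continuation from the boundary.} This is the main obstacle: passing from infinite-order vanishing of $h$ at a boundary patch to $h\equiv0$. The operator $\Delta_{g_\Omega}$ degenerates at $\partial\Omega$, so interior unique continuation does not apply directly.
\begin{itemize}
\item I would first try a Carleman estimate for the asymptotically complex hyperbolic Laplacian with weight $r^{-N}$, $N\to\infty$, in the spirit of Mazzeo's unique continuation at infinity for asymptotically hyperbolic metrics, adapted to the anisotropic CR scaling of the complex hyperbolic case. This would give $h\equiv0$ in a one-sided neighborhood of $p$.
\item A fallback is to realize $h$ locally as a solution of a Fuchsian-type boundary problem and invoke uniqueness for formal solutions with non-indicial exponents, as in Graham's work on the Bergman Laplacian on strongly pseudoconvex domains.
\end{itemize}
Once $h\equiv0$ near $p$, real analyticity of $h$ on the connected set $\Omega^\ast$ (its complement lies in a complex hypersurface, by the result of Huang--Li) gives $J_\Omega\equiv c_n$. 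Hence $g_\Omega$ is K\"ahler--Einstein.
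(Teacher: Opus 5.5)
\textbf{The gap is in Step~1, and Step~2 inherits it.} The claim $h=O(r^{n+1}|\log r|)$ is false at a general strongly pseudoconvex point. You controlled the singular part of $\log K_\Omega$, but not the smooth factor $\phi$. For a Fefferman defining function one has $\phi=\frac{n!}{\pi^n}\bigl(1+\phi_2r^2+O(r^3)\bigr)$, where $\phi_2$ is a nonzero multiple of $\|S\|^2$ ($S$ the Chern--Moser tensor) when $n\ge3$.

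Put $u_\Omega=(C_nK_\Omega)^{-1/(n+1)}=r-\frac{\phi_2}{n+1}r^3+O(r^4)$ into $J_\Omega=c_n\mathcal J(u_\Omega)$, and use $\mathcal J(r+\varphi r^m)=\mathcal J(r)+m(m-n-2)\varphi r^{m-1}+O(r^m)$. This gives $J_\Omega=c_n\bigl(1+\frac{3(n-1)}{n+1}\phi_2r^2+O(r^3)\bigr)$, so $h$ starts at order $r^2$.

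Run your indicial induction on this true expansion. It kills the coefficients of $r^2,\dots,r^{n-1}$, but it stalls at $k=n$, which is an indicial root. The equation $\Delta_{g_\Omega}h=0$ imposes nothing on the $r^n$ coefficient, so you cannot reach $h=O(r^\infty)$. A symptom is that your argument as written never uses $n\ge3$. It enters here, through $\|S\|^2$, not through $\psi|_{\partial\Omega}$.

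\textbf{The missing ingredient is sphericity of $p$,} which the paper takes from Huang--Li. You could also extract it from your own Step~2. The $r^2$ equation forces $\|S\|^2\equiv0$ on $\partial\Omega$ near $p$, because $2\ne n$. For $n\ge3$, Chern--Moser theory then gives local sphericity.

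At a spherical point, localization and the transformation law give $K_\Omega=C_n^{-1}r_0^{-(n+1)}+E$ with $E$ smooth up to the boundary. So there is no logarithmic term at all. Moreover $r_0=(1-|F|^2)|\det F'|^{-2/(n+1)}$ solves $\mathcal J(r_0)=1$ exactly. Hence $u_\Omega=r_0+ar_0^{n+2}+\cdots$, and the factor $m(m-n-2)$ vanishes at $m=n+2$. This yields $J_\Omega=c_n+O(r_0^{n+2})$ with a pure power expansion. Only then does your indicial argument, with all exponents $\ge n+2>n$, give infinite-order vanishing.

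Sphericity also makes Step~3 concrete. The Bergman metric becomes an $O(r_0^{n+1})$ perturbation, in ACH frames, of the exact complex hyperbolic metric. So the Biquard--Herzlich half-ball coordinates and Carleman estimate apply directly, which is the paper's route and matches your primary plan. Your fallback is not a proof: uniqueness of formal solutions only recovers $h=O(r^\infty)$, and that does not force a smooth function to vanish.
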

In particular, we have the following
\begin{corollary}
    Let $\Omega\Subset\mathbb C^n$ with $n\geq 3$ be a smooth bounded pseudoconvex domain. If the Bergman metric of $\Omega$ has constant scalar curvature, then it  is K\"ahler--Einstein.
\end{corollary}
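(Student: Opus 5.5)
The plan is to show that the harmonic function $u:=\log J_\Omega-\log c_n$ vanishes identically on $\Omega^*$. Once $J_\Omega\equiv c_n$ we have $\det G_\Omega=c_nK_\Omega$, so $\mathrm{Ric}_{i\overline j}=-\partial_i\partial_{\overline j}\log K_\Omega=-g_{i\overline j}$ and $g_\Omega$ is Kähler--Einstein. Throughout, the only global input is the identity derived above: $S_\Omega$ constant iff $\Delta_{g_\Omega}u=0$ on $\Omega^*$. The rest of the argument is localized at the strongly pseudoconvex point $p$, and real analyticity then propagates the conclusion to all of $\Omega^*$.

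\emph{Step 1: asymptotic expansion of $u$ near $p$.} By the localization of the Bergman kernel at a strongly pseudoconvex point, Fefferman's expansion $K_\Omega=\phi\,r^{-(n+1)}+\psi\log(-r)$ holds near $p$, with $r$ a Fefferman defining function. I would compute the expansion of $J_\Omega$ with respect to $r$ through this. One gets $u=\sum_{k\ge1}a_k r^k+(\text{log terms})$ with coefficients smooth up to the boundary. The lowest nonconstant coefficients are local CR invariants. In particular, for $n\ge3$ the coefficient $a_2|_{\partial\Omega}$ is a positive multiple of $|S|^2$, where $S$ is the Chern--Moser tensor; $a_1|_{\partial\Omega}=0$ by the normalization of $r$. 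This is where $n\ge3$ enters, since for $n=2$ the Chern--Moser tensor vanishes identically and the first obstruction sits at much higher order.

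\emph{Step 2: indicial analysis of the Bergman Laplacian.} Near $p$, $g_\Omega$ is asymptotically complex hyperbolic. So $\Delta_{g_\Omega}$ acts on $r^k f$, modulo higher powers of $r$, by the indicial polynomial $k(k-n-1)$ (up to a positive constant) times $r^kf$. Therefore $\Delta_{g_\Omega}u=0$ forces the coefficients to be determined recursively by lower ones for $0<k<n+1$. Since $u\to0$ at $p$ and $a_1|_{\partial\Omega}=0$, the recursion gives $a_2|_{\partial\Omega}=0$. Hence the Chern--Moser tensor vanishes on a neighborhood of $p$ in $\partial\Omega$, and $\partial\Omega$ is spherical near $p$.

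\emph{Step 3: from sphericity to $u\equiv0$.} When $\partial\Omega$ is spherical near $p$, a local biholomorphism carries a one-sided neighborhood of $p$ onto a piece of the unit ball near a boundary point. By localization, $K_\Omega$ then differs from the transplanted ball kernel by a term that is smooth up to the boundary, and Fefferman's log coefficient $\psi$ vanishes to infinite order on $\partial\Omega$ near $p$. This yields $u=O(r^{n+1})$ with no obstruction from lower orders. At the indicial root $n+1$ the coefficient is free, so to finish I would use the harmonic equation together with the explicit form of the smooth correction, which comes from $\phi\,r^{-(n+1)}$ with $\phi$ now determined by the global kernel. The goal is to show that $u$ vanishes to infinite order in $r$ at $p$. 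A unique continuation argument for the elliptic operator $\Delta_{g_\Omega}$ near the boundary (or a maximum principle on a small hyperbolic neighborhood, exploiting $u\to0$ along $\partial\Omega$ near $p$) would then give $u\equiv0$ near $p$, and real analyticity gives $u\equiv0$ on $\Omega^*$.

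I expect Step 3 to be the main obstacle. The free coefficient at order $r^{n+1}$ is exactly where a nonzero harmonic function could hide, so I would first try a maximum-principle/Liouville-type argument on the locally spherical piece, compared with the ball model where $J\equiv c_n$. Failing that, I would invoke Huang--Li type rigidity for spherical strongly pseudoconvex points to identify $\Omega$ with the ball modulo a proper subvariety. The corollary follows immediately, since every smooth bounded pseudoconvex domain has strongly pseudoconvex boundary points (e.g.\ a point of $\partial\Omega$ farthest from the origin).
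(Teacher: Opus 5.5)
\textbf{There is a genuine gap: your Step 3 does not prove anything.} Your reduction to a strongly pseudoconvex boundary point is fine. Your Steps 1--2 are, in outline, the sphericity theorem that the paper imports from Huang--Li \cite{HL26}, and they work for $n\ge 3$. Step 3, however, is a menu of strategies, and the obstacle you single out there comes from a miscomputation.

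\textbf{The indicial roots are $0$ and $n$, not $0$ and $n+1$.} For $g\sim(n+1)\partial\overline\partial(-\log r)$ one has $\Delta_g(a\,r^k)=\frac{k(k-n)}{n+1}\,a\,r^k+O(r^{k+1})$. You can check this on the ball, where $r=1-|z|^2$ and $g^{i\overline j}=\frac{r}{n+1}(\delta_{ij}-z_i\overline z_j)$. You seem to have conflated it with the linearization $m(m-n-2)$ of the Fefferman Monge--Amp\`ere operator.

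\textbf{That linearization is what settles the expansion.} After sphericity, localization gives $K_\Omega=\frac{n!}{\pi^n}r_0^{-(n+1)}+E$ with $E$ smooth up to the boundary. Hence $u_\Omega=(C_nK_\Omega)^{-1/(n+1)}=r_0+a\,r_0^{n+2}+O(r_0^{n+3})$. Since $m=n+2$ is the resonant order of $\mathcal J$, the identity $\mathcal J(r+\varphi r^m)=\mathcal J(r)+m(m-n-2)\varphi r^{m-1}+O(r^m)$ together with $J_\Omega=c_n\mathcal J(u_\Omega)$ gives $J_\Omega=c_n+O(r_0^{n+2})$, smooth and with no log terms. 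No $m\ge n+1$ is an indicial root, so $\Delta_{g_\Omega}\log(J_\Omega/c_n)=0$ kills the Taylor coefficients one at a time, and $\log(J_\Omega/c_n)=O(\rho^\infty)$. The free coefficient you regard as the main obstacle does not exist.

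\textbf{The missing step is deducing $u\equiv0$ near $p$ from $u=O(\rho^\infty)$ and $\Delta_{g_\Omega}u=0$.} You only name this step, and none of your proposed routes works:
\begin{itemize}
\item Classical unique continuation does not apply. $\Delta_{g_\Omega}$ is not elliptic up to $\partial\Omega$: its coefficients $g^{i\overline j}$ vanish there, like $r$ in complex tangential directions and like $r^2$ in the complex normal direction. It is an asymptotically complex hyperbolic Laplacian.
\item A maximum principle cannot work. On the ball, the Poisson--Szeg\H{o} integral of nonnegative data supported away from $q$ is a nonzero Bergman-harmonic function that vanishes, to order $n$, along a whole boundary patch around $q$. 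So knowing $u\to0$ along $\partial\Omega$ near $p$ carries no information by itself. Only infinite-order flatness rules such functions out, and exploiting it needs a quantitative tool.
\item The rigidity fallback is unavailable. Sphericity near $p$ is purely local. The known rigidity results of Huang--Li type take the K\"ahler--Einstein property as a hypothesis, so invoking them here is circular.
\end{itemize}

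\textbf{How the paper fills this step.} It adapts the Biquard--Herzlich Carleman estimate:
\begin{itemize}
\item From $K_\Omega=K_0(1+r_0^{n+1}h)$ it shows that $\widehat g-\widehat g_{\rm ch}$ is $O(r_0^{n+1})$ together with all ACH derivatives. Hence it lies in the double-weighted H\"older spaces.
\item This makes available the special defining function $x$ with $|dx/x|_{\widehat g}=1$, and the Carleman inequality with weight $x^{-\lambda}$.
\item The infinite-order flatness of $f$ justifies the integrations by parts on the noncompact slices and the removal of the cutoff near $x=0$.
\item Letting $\lambda\to\infty$ then forces $f=0$ near $p$.
\end{itemize}
Without this, or an equivalent boundary unique continuation theorem for infinitely flat solutions, your proposal does not reach the corollary.
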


\begin{remark}
    Our results are stated for $n\geq 3$. However, in the two dimensional case, if we assume in addition that some boundary point $p\in\partial\Omega$ is spherical, then under the same assumptions as in Theorem \ref{main} the same argument yields  the Bergman metric of $\Omega$ is K\"ahler--Einstein.
\end{remark}

The proof of Theorem \ref{main} proceeds in two steps. First, building up on the sphericity result in \cite{HL26}, we show that
\(J_\Omega\) vanishes to infinite order along \(\partial\Omega\) near \(p\).
Together with Bell's result \cite{Be87}, which yields the real analyticity of
\(J_\Omega\) when \(\partial\Omega\) is real analytic, this already implies
that the Bergman metric of \(\Omega\) is K\"ahler--Einstein in the
real-analytic category. This part is essentially taken from  an earlier
version of~\cite{HL26}.  (The later arXiv version \cite{HL26} contains the vanishing of second order as it was written as an appendix for \cite{HHL26} and that is all needed for application in \cite{HHL26}.)
The second step is to adapt the relevant boundary unique continuation result in order
to establish the K\"ahler--Einstein property in the smooth category as well. In particular, we adapt the boundary unique continuation method of Biquard and Herzlich \cite{BH14}. Related boundary unique continuation problems have recently attracted considerable attention. We refer to Berhanu’s survey \cite{Ber21} and his resolution of the celebrated Baouendi–Rothschild conjecture for elliptic PDEs with real-analytic coefficients \cite{Ber25}.

\bigskip

{\bf Acknowledgment}: 
The first author expresses his sincere gratitude to  Professor Xiaojun Huang, for his guidance, support, and
encouragement over the years. This paper is particularly indebted
to him, as it is a natural continuation of our previous note
\cite{HL26}. An earlier version of that note already contained
a solution in the real-analytic category, but Professor Huang
urged us to omit that case and encouraged the authors independently  to pursue the
problem in its full generality. He also had extensive discussions
with Professor S. Chanillo on Carleman estimates and boundary
unique continuation for subelliptic differential operators
and ACH Laplacians, and shared the resulting references and
insights with us in May 2026. We  thank Professor S. Chanillo
for his contributions through these discussions.

\section{vanishing to infinite order of Bergman invariant function near a stronlgy pseudoconvex boundary point}

Let $\Omega\subset\mathbb C^n$ be a pseudoconvex domain with $p\in\partial\Omega$ a strongly pseudoconvex boundary point. Let \(\rho\) be a defining function for
\(\Omega\) near \(p\in\partial\Omega\). For
\(f\in C^\infty(\overline{\Omega}\cap U)\), we write $f=O(\rho^k)$ if there exist a $f_k\in C^\infty(\overline\Omega\cap U)$ such that $f=\rho^k f_k$, and 
$
f=O(\rho^\infty)
$
near \(p\) if $f=O(\rho^k)$ for each $k\in\mathbb N$.
Equivalently, \(f\) vanishes to
infinite order along the boundary patch under consideration.

In this section, we prove that \(J_\Omega-c_n\) vanishes to infinite
order along the boundary near $p$,
under the assumption that the Bergman metric has constant scalar
curvature.

The proof of this property is quite similar to that of the corresponding  known result (see, e.g.,  \cite{HL23}) where the CSC condition was replaced by the slightly stronger  K\"ahler--Einstein condition. The new observation here is to  employ  the Martin expansion formula of $J_\Omega$ \cite{Ma21} besides the formula of Christoffers for the Bergman expansion \cite{Ch81}.  The asymptotic expansion formulas of Engliš \cite{Eng08} also play an important role.

\begin{theorem}\label{9-16-lem1}
    Let $\Omega\subset {\mathbb C}^n$ with $n\ge 3$ be a pseudoconvex domain with $p\in \partial\Omega$ a  smooth strongly pseudoconvex  boundary point. If the Bergman metric of $\Omega$ has constant scalar curvature, then $$J_{\Omega}(z)=c_n+O(\rho^\infty),~~z\approx p$$ where $\rho$ is a defining function of $\Omega$ near $p$.
\end{theorem}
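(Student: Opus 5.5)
We localize at $p$ and use the Fefferman expansion of the Bergman kernel. Choose $\rho$ to be a Fefferman approximate solution to the complex Monge--Amp\`ere equation (with $\rho<0$ in $\Omega$). Near $p$, Fefferman's theorem together with Hörmander-type localization for pseudoconvex domains at a strongly pseudoconvex point gives
\[
K_\Omega=\frac{\phi}{\rho^{n+1}}+\psi\log(-\rho),\qquad \phi,\psi\in C^\infty(\overline\Omega\cap U).
\]
Substituting this into $J_\Omega=\det G_\Omega/K_\Omega$ gives an expansion
\[
J_\Omega=c_n+\sum_{k\ge 1}\bigl(a_k\rho^k+\sum_{j\ge1} b_{kj}\rho^{k}(\rho^{n+1}\log(-\rho))^j\bigr)
\]
in the sense of asymptotic series. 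We record this expansion through the formulas of Christoffers and Engliš, or more simply through Martin's expansion of $J_\Omega$. The goal is to show that every smooth coefficient, and every log coefficient, vanishes to infinite order at $\partial\Omega$ near $p$.

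The key tool is the ACH (asymptotically complex hyperbolic) structure of the Bergman metric near $p$. By the CSC assumption, $\Delta_{g_\Omega}\log J_\Omega=0$. The Bergman metric is, modulo $O(\rho)$ corrections, the metric $-\partial\bar\partial\log(-\rho)$, whose Laplacian acting on $\rho^s f$ with $f$ smooth gives
\[
\Delta(\rho^s f)=c\,s(s-n)\rho^s f+O(\rho^{s+1})
\]
(normalizations up to constants). The indicial roots are therefore $s=0$ and $s=n$, and at $s=n$ a log term may appear.

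We then run an induction. Suppose $\log J_\Omega-\log c_n=O(\rho^k)$ with leading term $a\rho^k$, $1\le k$. Applying the Laplacian, the leading coefficient gives $k(k-n)a=0$, so $a|_{\partial\Omega}=0$ whenever $k\ne n$. Hence $J_\Omega=c_n+O(\rho^{k+1})$, and the induction climbs up to order $n$ using only smooth terms. The log terms $\rho^{m}\log(-\rho)$ with $m\ge n+1$ are handled the same way. The indicial operator applied to $\rho^m\log(-\rho)$ produces $c\,m(m-n)\rho^m\log(-\rho)+c(2m-n)\rho^m$. Since $m\ne 0,n$, each log coefficient must vanish, and the smooth terms are then forced away in turn.

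The critical step is order exactly $n$, where the indicial equation gives no information. Here I would use the input from \cite{HL26}, the earlier sphericity result. In their argument, vanishing to order $n$ (or order 2 when $n\ge3$) forces local CR sphericity of $\partial\Omega$ near $p$. The plan is as follows:

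\begin{enumerate}
\item Show by the indicial argument that $J_\Omega=c_n+O(\rho^{2})$, which is available since $n\ge3$ means $1,2\ne n$ only when $n\ge3$. This is exactly where the restriction on $n$ enters.
\item Invoke \cite{HL26} to deduce that $\partial\Omega$ is spherical near $p$.
\item Apply Fefferman's construction to the ball after a local biholomorphism to a piece of the sphere. For a spherical boundary, Christoffers' and Engliš' coefficient formulas (which are local CR invariants, polynomial in the Chern--Moser curvature and its derivatives) show that the invariant coefficients in the expansion of $J_\Omega$ all vanish. Martin's formula confirms the same for $J_\Omega$ itself.
\end{enumerate}

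The ambiguous order-$n$ coefficient and the log coefficient $\psi|_{\partial\Omega}$ are then zero. Continuing the induction above past $n$, where no further resonances occur, gives $O(\rho^\infty)$. I expect this resonant order to be the main obstacle: a global term not determined locally could appear there. The hope is that the log coefficient's vanishing for spherical boundaries, together with harmonicity, controls it. One may additionally need the fact that the global contribution enters $K_\Omega$ only at order $O(\rho^\infty)$ relative to the local model, via localization of Bergman kernels at strongly pseudoconvex points.
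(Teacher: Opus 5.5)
Your overall architecture is the paper's. You take sphericity of $p$ from \cite{HL26}; your Step~1 is unnecessary, since that result is used directly in the form ``CSC $\Rightarrow$ $p$ spherical''. You then use $\rho^{-m}\Delta_{g_\Omega}(a_m\rho^m)\to m(n-m)a_m$ on $\partial\Omega$ to kill every non-resonant coefficient of $\log J_\Omega-\log c_n$.

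The gap is the step you flag yourself. You must show that near a spherical point the expansion of $J_\Omega-c_n$ has no logarithmic terms and no terms of order $\le n$, so the resonant order $n$ never has to be confronted. Your Step~3 asserts this via ``local CR invariant'' coefficient formulas, but explicit low-order formulas do not tell you where the non-locally determined part of the expansion begins. You then withdraw the claim and leave it as a hope.

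The fallback you propose is false. Localization does not make the global contribution $O(\rho^\infty)$ relative to the local model. It gives only $K_\Omega-K_{\Omega\cap U}\in C^\infty(\overline\Omega\cap U)$, a term of relative size $\rho^{n+1}$ that is generically nonzero. For the same reason, it is not true that all coefficients of $J_\Omega$ vanish by sphericity.

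The missing step is short and needs no invariant theory.
\begin{enumerate}
\item Let $F$ be the local biholomorphism onto a piece of the ball and $r_0=(1-|F|^2)|\det F'|^{-2/(n+1)}$, so that $\mathcal J(r_0)=1$ exactly.
\item The transformation law, the explicit ball kernel, and localization on both sides give $K_\Omega=\frac{n!}{\pi^n}r_0^{-(n+1)}+E$, with $E$ smooth up to $\partial\Omega$ near $p$. Hence there is no log term at all.
\item The Bergman defining function is then $u_\Omega=(C_nK_\Omega)^{-1/(n+1)}=r_0+ar_0^{n+2}+O(r_0^{n+3})$.
\item Fefferman's formula $J_\Omega=c_n\mathcal J(u_\Omega)$, together with $\mathcal J(r+\varphi r^m)=\mathcal J(r)+m(m-n-2)\varphi r^{m-1}+O(r^m)$ at $m=n+2$, gives $J_\Omega=c_n+O(\rho^{n+2})$.
\end{enumerate}
Even the cruder observation that $E$ perturbs $\partial\bar\partial\log K_\Omega$ only at relative order $r_0^{n+1}$ in an ACH frame already puts you past $n$.

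So the global term $E$ is not negligible. It is harmless because it enters beyond the resonance, where $m(n-m)\neq 0$, and it is killed by harmonicity rather than by sphericity. With this step inserted, your induction closes and yields $J_\Omega=c_n+O(\rho^\infty)$.
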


A particular consequence is when $\partial \Omega$ is real analytic near $p$, by a result of Bell \cite{Be87} and its slight generalization, $J_{\Omega}$  can be extended to a  real analytic function in a neighborhood of $p$ in $\mathbb C^n$ and thus $J_{\Omega}$ is a constant and thus the Bergman metric is K\"ahler--Einstein.

\begin{proof}
First, under the csc assumption of the Bergman metric, it follows from a result of Huang-Li \cite{HL26} that $p$ is a spherical boundary point. 

By sphericity of $p$, there is a neighborhood $U$ of $p$ in $\mathbb C^n$ and a neighborhood $V$ of $q$ and a biholomorphic map $F: \Omega\cap U\rightarrow \mathbb B^n\cap V$ which extends smoothly to the boundary and maps $\partial\Omega\cap U$ onto $V\cap\partial\mathbb B^n$. Define
    $$r_0=(1-|F(z)|^2)|\det F'(z)|^{-\frac{2}{n+1}}.$$
    Then $r_0$ is a smooth defining function near $p$.
    For any real-valued  $C^2$-smooth function $u$, we define the Fefferman-Monge-Ampere operator:
\begin{equation}\label{MA}
\mathcal J(u)=(-1)^n\det\left[\begin{array}{cc}
     u & u_{\overline\beta}  \\
     u_{\alpha}&  u_{\alpha\overline\beta}
\end{array}\right].
\end{equation}
    The transformation law for Fefferman--Monge--Ampere operator implies that
    $$\mathcal J(r_0)=1.$$
By localization of Bergman kernels, we have
\begin{equation}\label{localization}
\begin{split}
&K_\Omega(z, w)-K_{\Omega\cap U}(z, w)\in C^\infty((U\cap\overline\Omega)\times(U\cap\overline\Omega));\\
&K_{ \mathbb B^n}(\xi, \eta)-K_{\mathbb B^n\cap V}(\xi, \eta)\in C^\infty((V\cap\overline{\mathbb{B}^n})\times (V\cap\overline{\mathbb B^n})).
    \end{split}
\end{equation}
From the transformation law of Bergman kernels, we have 
\begin{equation}\label{transformation of Bergman}
K_{\Omega\cap U}(z, w)=K_{\mathbb B^n\cap V}(F(z), F(w))\det F'(z)\overline{\det F'(w)}.
\end{equation}
Then near $p$ we define 
$$K_0(z, w):=K_{\mathbb B^n}(F(z), F(w))\det F'(z)\overline{\det F'(w)}.$$
It follows from (\ref{localization}) and (\ref{localization}) that 
$$K_{\Omega}(z, w)-K_0(z, w)\in C^\infty((U\cap\overline\Omega)\times(U\cap\overline\Omega)).$$
Consequently, on the diagonal,
$$K_{\Omega}(z, z)=K_0(z, z)+E(z), E(z)\in C^\infty(U\cap\overline\Omega).$$
From the explicit formal of the Bergman kernel of $\mathbb B^n$, we have
$$K_0(z, z)=\frac{n!}{\pi^n}\frac{1}{r_0^{n+1}}$$
and thus,
\begin{equation}\label{9-13-a1}
    K_{\Omega}(z, z)=\frac{1}{C_nr_0^{n+1}}+E(z),
\end{equation}
where $C_n=\frac{\pi^n}{n!}$.

We introduce the Bergman defining function $$u_\Omega=(C_n K_{\Omega})^{-\frac{1}{n+1}}.$$
Since $E(z)$ is smooth up to the boundary near $p$, then $u_{\Omega}$ is another smooth defining function for $\Omega$ near $p$. Moreover,
\begin{equation}\label{Bergman defining function}
\begin{split}
    u_{\Omega}&=r_0(1+C_n E(z)r_0^{n+1})^{-\frac{1}{n+1}}=r_0-\frac{C_n}{n+1}Er_0^{n+2}+O(r_0^{n+3})\\
    &=r_0+ar_0^{n+2}+O(r_0^{n+3})
    \end{split}
\end{equation}
where $a=-\frac{C_n}{n+1}E$.

We now use the standard  identity for Fefferman--Monge--Ampere operator \cite[p. 191]{HK97}. If $r$ is a smooth strongly plurisubharmonic defining function and $$u=r+\varphi r^m+O(r^{m+1}),$$
then
\begin{equation}\label{identity for J}
    \mathcal J(u)=\mathcal J(r)+m(m-n-2)\varphi r^{m-1}+O(r^m).
\end{equation}

Substituting (\ref{Bergman defining function}) to (\ref{identity for J}), we have
\begin{equation}
    \mathcal J(u_{\Omega})=\mathcal J(r_0)+O(r_0^{n+2})=1+O(r_0^{n+2}).
\end{equation}
On the other hand, we have the Fefferman formula (see \cite{Ma21}) 
\begin{equation}\label{Fefferman formula}
    J_{\Omega}=\frac{(n+1)^n\pi^n}{n!} \mathcal J(u_\Omega)
\end{equation}
It follows from (\ref{Fefferman formula}) that 
\begin{equation}\label{9-12-a1}
    J_{\Omega}=c_n+O(r_0^{n+2}).
\end{equation}
Hence, 
$$J_{\Omega}=c_n+O(\rho^{n+2}).$$
It follows that 
$$\log J_{\Omega}=\log c_n+O(\rho^{n+2}).$$

We write $\log J_\Omega$ in the following asymptotic expansion  near $p$, 
\begin{equation}\label{expansion of log J}
    \log J_{\Omega}\sim \log c_n+a_{n+2}\rho^{n+2}+a_{n+3}\rho^{n+3}+\cdots.
\end{equation}

That is, for any $m\in\mathbb N$, the difference $$\log J_\Omega(z)-\log c_n-\sum_{j=n+2}^ma_j\rho^j=O(\rho^{m+1}).$$

Since 
 $\Delta_{g_\Omega}\log J_{\Omega}=0$, that is, 
 $$\sum_{i, j}g^{\overline j i}\frac{\partial^2}{\partial z_i\partial\overline z_j}\log J_{\Omega}=0.$$
By direct calculation, for $m\geq n+2$
\begin{equation}\label{9-16-a1}
\begin{split}
\frac{\partial^2}{\partial z_i \partial \overline{z}_j} \left( a_m \rho^m \right)
&= \rho^m \frac{\partial^2 a_m}{\partial z_i \partial \overline{z}_j}  + m \rho^{m-1} \left(
\frac{\partial a_m}{\partial z_i} \frac{\partial \rho}{\partial \overline{z}_j}
+ \frac{\partial a_m}{\partial \overline{z}_j} \frac{\partial \rho}{\partial z_i}
\right) \\
& + a_m m \rho^{m-1} \frac{\partial^2 \rho}{\partial z_i \partial \overline{z}_j} + a_m m(m-1) \rho^{m-2} \frac{\partial \rho}{\partial z_i} \frac{\partial \rho}{\partial \overline{z}_j}.
\end{split}
\end{equation}
From \cite[Section 3]{Eng08} we have
\begin{equation}\label{5-23-a2}
\begin{split}
    &\rho^{-1}g^{\overline j i}=\rho^{-1}[\log \rho]^{\overline j l}H^i_l, H^i_l\in C^n(U\cap\overline G), H^i_l|_{\partial G}=-\frac{1}{n+1}\delta^i_l.\\
    &\frac{1}{\rho^2}[\log\rho]^{\overline j i}\rho_i, ~~\frac{1}{\rho^2}[\log\rho]^{\overline j i}\rho_{\overline j}\in C^\infty(\overline G),~[\log \rho]^{\overline j i}\rho_{\overline j}\rho_i=\rho [\log \rho]^{\overline j i}\rho_{i\overline j}-n\rho^2.\\
    &\lim_{z\rightarrow q} \frac{1}{\rho^2}[\log \rho]^{\overline j i}\rho_{\overline j}\rho_i=-1, \quad q\in\partial\Omega, q\approx p.
    \end{split}
\end{equation}
By (\ref{9-16-a1}) and (\ref{5-23-a2}), for $m\geq n+2$, we have for $q\in\partial\Omega$ and $q\approx p$ that
\begin{equation}\label{5-27-a1}
\begin{split}
&\lim_{z\rightarrow q}\rho^{-m} g^{\overline j i}
  \frac{\partial^2}{\partial z_i \partial \overline z_j} (a_m \rho^m) \\
={}&
\begin{aligned}[t]
&\lim_{z\rightarrow q}\Bigg[
g^{\overline j i} \frac{\partial^2 a_m}{\partial z_i \partial \overline z_j}
+ \frac{m}{\rho} g^{\overline j i}
\left(
\frac{\partial a_m}{\partial \overline z_j} \frac{\partial \rho}{\partial z_i}
+ \frac{\partial a_m}{\partial z_i} \frac{\partial \rho}{\partial \overline z_j}
\right) \\
&\qquad
+ \frac{m(m-1)a_m}{\rho^2} g^{\overline j i}
\frac{\partial \rho}{\partial z_i} \frac{\partial \rho}{\partial \overline z_j}
+ \frac{m a_m}{\rho} g^{\overline j i}
\frac{\partial^2 \rho}{\partial z_i \partial \overline z_j}
\Bigg]
\end{aligned}\\
={}& m(n-m)a_m(q).
\end{split}
\end{equation}
Hence, the condition $\Delta_{g_\Omega}(\log J_{\Omega})=0$ implies that $a_m(q)=0,~q\approx p$ for all $m\geq n+2$. Thus, $\log J_{\Omega}=\log c_n+O(\rho^\infty)$. That is, $J_\Omega=c_n+O(\rho^\infty)$ near $p$.
\end{proof}

\section{Boundary unique continuation near spherical boundary points}

In this section, we adapt the local half--ball method of Biquard and
Herzlich \cite{BH14} to the Bergman Laplacian. We use their
complex hyperbolic half--ball coordinates, weighted Hölder spaces,
special defining function, and Carleman estimate. Our task is to
verify that the Bergman metric near a spherical boundary point
satisfies the geometric and weighted hypotheses of their argument.
All equation and page references to \cite{BH14} below refer to its
arXiv version. We will prove the following theorem.
\begin{theorem}\label{thm:local-unique-continuation}
Let \(\Omega\subset\mathbb C^n\) be a pseudoconvex domain, and let
\(p\in\partial\Omega\) be a spherical boundary point.  Suppose that
\(f\) is defined in a one-sided neighborhood of \(p\), smooth up
to the boundary, and satisfies
\[
 \Delta_{g_\Omega}f=0,
 \qquad
 f=O(\rho^\infty)
\]
along a boundary neighborhood of \(p\).  Then \(f\equiv0\) near \(p\).
\end{theorem}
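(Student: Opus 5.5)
The plan is to reduce to the model case of the unit ball and then apply the local half-ball Carleman argument of Biquard--Herzlich \cite{BH14}.

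\emph{Step 1: transfer to the ball.} Since $p$ is spherical, take $F:\Omega\cap U\to\mathbb B^n\cap V$ as in the proof of Theorem~\ref{9-16-lem1}. By \eqref{localization} and \eqref{transformation of Bergman}, $K_\Omega(z,z)=|\det F'|^2\bigl(K_{\mathbb B^n}(F,F)+\tilde E\bigr)$ with $\tilde E$ smooth up to the boundary. Set $h=\log K_{\mathbb B^n}(F,F)+\log|\det F'|^2$; the pluriharmonic term $\log|\det F'|^2$ drops out of the $\partial\overline\partial$. Pushing forward by $F$, on $\mathbb B^n\cap V$ we obtain
\[
g=g_{\mathbb B^n}+\partial\overline\partial\log\bigl(1+C_n\tilde E\, r^{n+1}\bigr),\qquad r=1-|\xi|^2 ,
\]
where $g_{\mathbb B^n}$ is the complex hyperbolic metric. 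The perturbation term is $\partial\overline\partial$ of a function that is $O(r^{n+1})$ and smooth up to the boundary. Measured in the ACH frame ($r^{-1}$ scaling in the complex normal direction, $r^{-1/2}$ in the contact directions), its components are $O(r^{n-1})$, so they decay at least like $r^{2}$ once $n\ge 3$.

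Hence $g$ is asymptotically complex hyperbolic to high order. The Laplacian can then be written as
\[
\Delta_g=\Delta_{\mathbb B^n}+P,
\]
where $P$ is a second-order operator built from the ACH vector fields $r\partial_r$, $r^{1/2}X_\alpha$, $rT$, with coefficients that are $O(r^\delta)$ for some $\delta>0$ in BH's weighted Hölder norms. The function $f\circ F^{-1}$ is still smooth up to the boundary and $O(r^\infty)$.

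\emph{Step 2: Carleman estimate.} Use BH's Heisenberg half-ball coordinates near $F(p)$ together with their special defining function $x$. Their Carleman estimate for the hyperbolic Laplacian has the form
\[
\|x^{-\tau}w\|\le C\tau^{-1}\|x^{-\tau}\Delta_{\mathbb B^n}w\|
\]
for $w$ compactly supported in the half-ball away from the inner spherical boundary and vanishing to infinite order at $x=0$, with $\tau$ large. This estimate will be applied to $w=\chi f$, where $\chi$ is a cutoff. Since $\Delta_g f=0$, we have $\Delta_{\mathbb B^n}(\chi f)=-\chi Pf+[\Delta_{\mathbb B^n},\chi]f$. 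The term $\chi Pf$ is absorbed into the left side for small half-balls, because its coefficients are small there and the estimate controls the weighted ACH gradients as well. The commutator term is supported where $x\ge\epsilon$. Letting $\tau\to\infty$ then forces $f=0$ on $\{x<\epsilon\}$. Infinite-order vanishing is exactly what makes $x^{-\tau}w$ finite for every $\tau$. Finally, real analyticity of $g_\Omega$ in the interior, together with the ellipticity of $\Delta_{g_\Omega}$, propagates $f\equiv0$ to a full one-sided neighborhood.

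\emph{Main obstacle.} The hard part is verifying that $P$ satisfies BH's hypotheses. Their estimate needs control of the gradient terms $\tau^{-1/2}\|x^{-\tau}\nabla w\|$, not merely $L^2$ control. I will therefore have to check that the first-order coefficients of $P$ in the ACH frame are $o(1)$ in the appropriate weighted $C^{1}$ sense, which includes the $T$-direction with its worse $r^{-1}$ weight. This is where the order $r^{n+1}$ of the perturbation, coming from the exact ball model via sphericity, is used. I would compute $P$ explicitly from $g^{-1}=g_{\mathbb B^n}^{-1}(I+A)^{-1}$ with $A=O(r^{n-1})$ in the frame, and compare the result term by term with BH's conditions.
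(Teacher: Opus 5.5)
Your Step 1 is essentially the paper's: pull back the ball kernel by $F$, write the metric as the complex hyperbolic model plus $Q$, and estimate $Q$ in the ACH frame. The frame scalings actually give $|Q|=O(r^{n+1})$, not $O(r^{n-1})$, because $\partial r$ vanishes on the contact directions, so no restriction on $n$ enters here.

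\textbf{The gap is in Step 2.} The difference $P=\Delta_g-\Delta_{\mathbb B^n}$ is a genuinely second-order operator. Its principal part is $(g^{-1}-g_{\mathbb B^n}^{-1})\cdot\nabla^2$, with ACH-frame coefficients of size $r^{n+1}$. A Carleman estimate absorbs lower-order perturbations, but not perturbations of the principal part. Even the strongest form of the estimate, the one with a Hessian term as in Biquard--Herzlich's (71), only gives
\[
\tau^{-1}\|x^{-\tau}\nabla^2 w\|^2\lesssim\|x^{-\tau}\Delta_{\mathbb B^n}w\|^2 .
\]
The $\tau^{1/2}$ loss cannot be removed in general; think of $w$ oscillating at frequency $\sim\tau$ near the characteristic set of the conjugated operator. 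Hence $\|x^{-\tau}a\nabla^2 w\|\le C\sup|a|\,\tau^{1/2}\|x^{-\tau}\Delta_{\mathbb B^n}w\|$. Absorbing this term would require $\sup_{\operatorname{supp}\chi}|a|\cdot\tau^{1/2}\to0$, which is impossible with $\chi$ fixed and $\tau\to\infty$. Shrinking the half-ball only makes $\sup|a|$ a fixed small constant. Your ``main obstacle'' paragraph only examines first-order coefficients, so it never confronts this.

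The missing ingredient is a Carleman estimate for $\Delta_{\widehat g}$ itself rather than for the model. The paper obtains it in three steps:
\begin{enumerate}
\item It shows $Q\in C^{k,\alpha}_{n+1}$ from the ACH estimate.
\item It upgrades this to the double-weighted $C^{k,\alpha}_{n+1,n+1}$ via Biquard--Herzlich's Lemma 4.1, which puts $\widehat g$ under the hypotheses of their Section 8.
\item It uses their special defining function $x=e^{2\phi}u$, constructed for $\widehat g$ so that $|dx/x|_{\widehat g}=1$ and the level sets have controlled second fundamental forms.
\end{enumerate}
Estimate (71) then holds for $\Delta_{\widehat g}$ directly, and nothing needs to be absorbed.

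A related point is also missing. In the Biquard--Herzlich half-ball the slices $\{x=c\}$ are complete and noncompact, so $\chi(x)f$ is not compactly supported as you assert. Its admissibility requires decay at the noncompact ends. The paper gets this from the double-weighted flatness $f\in C^{k,\alpha}_{N,N}$, again via Lemma 4.1. Cutting off tangentially instead would produce commutator terms reaching $x=0$, where $x^{-\tau}$ blows up, and the argument would break.
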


\subsection{The local half--ball and the Bergman metric}

After shrinking the spherical boundary patch, local sphericity gives
a one-sided biholomorphism, smooth up to the boundary,
\[
 F:U\cap\Omega\longrightarrow V\cap\mathbb B^n,
 \qquad
 F(p)=q\in\partial\mathbb B^n.
\]
We use the complex-hyperbolic half--ball construction of
\cite[Sections~1--3]{BH14}, transported from the Siegel realization
to the ball model.  Thus we may choose a model half--ball
\(H_{\rm mod}\subset\mathbb B^n\) whose open ideal boundary face
\(D_{\rm mod}\) satisfies
\[
 q\in D_{\rm mod},
 \qquad
 \overline{D_{\rm mod}}
 \Subset V\cap\partial\mathbb B^n.
\]
We denote the Biquard--Herzlich coordinates by
$
 (s_{\rm mod},\tau_{\rm mod},
   \varrho_{\rm mod},y_{\rm mod})
$
and set
\[
 u_{\rm mod}
 :=\operatorname{sech}^{2}(s_{\rm mod}/2),
 \qquad
 v_{\rm mod}
 :=\operatorname{sech}(\varrho_{\rm mod}/2),
\]
as in \cite[Lemma~1.1 and Lemma~3.3]{BH14}.  In particular, the level
hypersurfaces of \(u_{\rm mod}\) are complete with respect to their
induced model metrics.

For sufficiently small \(\varepsilon>0\), let
\[
 H_{\rm mod,\varepsilon}
 :=
 H_{\rm mod}\cap\{0<u_{\rm mod}<\varepsilon\}
 \subset V\cap\mathbb B^n
\]
and define
\begin{equation*}\label{eq:local-half-ball}
 E_{p,\varepsilon}
 :=
 F^{-1}(H_{\rm mod,\varepsilon})
 \subset U\cap\Omega.
\end{equation*}
We use the same symbols
$
 (s,\tau,\varrho,y), u, v
$
for the corresponding functions transported to
\(E_{p,\varepsilon}\) by \(F\).  The weighted Hölder spaces below are
likewise the Biquard--Herzlich spaces
\cite[(29)--(30) and (47)]{BH14}, transported by \(F\).

Define
\begin{equation}\label{eq:model-defining-function}
 r_0
 :=
 (1-|F|^2)|\det F'|^{-2/(n+1)}.
\end{equation}
The pullback of the diagonal Bergman kernel of the ball is
\[
 K_0(z,z)
 =
 \frac{n!}{\pi^n}r_0^{-(n+1)}.
\]
Here \(K_0\) is a model kernel, not the Bergman kernel of
\(U\cap\Omega\).  Localization of the Bergman kernel on the spherical
boundary patch gives
\[
 K_\Omega(z,z)-K_0(z,z)
 \in C^\infty(\overline\Omega\cap U).
\]
Consequently, after shrinking \(U\),
\begin{equation}\label{eq:kernel-comparison}
 K_\Omega
 =
 \frac{n!}{\pi^n}r_0^{-(n+1)}
 \bigl(1+r_0^{n+1}h\bigr),
 \qquad
 h\in C^\infty(\overline\Omega\cap U).
\end{equation}

Let
\[
 g_{\rm ch}
 :=
 \partial\bar\partial\log K_0
 =
 (n+1)\partial\bar\partial(-\log r_0)
\]
be the pullback of the Hermitian Bergman metric of \(\mathbb B^n\).
Put
\[
 g_{\rm ch}^{\mathbb R}
 :=2\operatorname{Re}g_{\rm ch},
 \qquad
 \widehat g_{\rm ch}
 :=\frac{2}{n+1}g_{\rm ch}^{\mathbb R},
 \qquad
 \widehat g
 :=\frac{2}{n+1}g_\Omega^{\mathbb R}.
\]
It follows from \eqref{eq:kernel-comparison} that
\begin{equation}\label{eq:metric-comparison}
 \widehat g
 =
 \widehat g_{\rm ch}+Q,
 \qquad
 Q
 =
 \frac{4}{n+1}\operatorname{Re}\partial\bar\partial
 \log(1+r_0^{n+1}h).
\end{equation}

We estimate \(Q\) in an ACH frame.  On the collar
\(0\leq r_0<\sigma\), let
\[
 M_t:=\{r_0=t\},
 \qquad
 \theta:=\frac{i}{2}(\bar\partial r_0-\partial r_0),
 \qquad
 H_t:=\ker(\theta|_{TM_t}),
\]
and let
\[
 h_t(X,Y):=-d\theta(X,JY),
 \qquad X,Y\in H_t,
\]
be the Levi metric.  Let \(T_t\) be the Reeb vector field of
\(\theta|_{M_t}\), choose local \(h_t\)-orthonormal frames
\(Y_1,\ldots,Y_{2n-2}\) of \(H_t\), and choose a smooth transverse
field \(N\) satisfying \(dr_0(N)=1\).

With
\[
 \mathcal L_{r_0}
 :=
 -2\operatorname{Re}(\partial\bar\partial r_0),
\]
one has
\begin{equation}\label{eq:exact-ch-metric}
 \widehat g_{\rm ch}
 =
 \frac{dr_0\otimes dr_0}{r_0^2}
 +4\frac{\theta\otimes\theta}{r_0^2}
 +2\frac{\mathcal L_{r_0}}{r_0}.
\end{equation}
It follows that
\begin{equation}\label{eq:ACH-frame}
 r_0N,
 \qquad
 \frac{r_0}{2}T_t,
 \qquad
 \sqrt{\frac{r_0}{2}}Y_1,\ldots,
 \sqrt{\frac{r_0}{2}}Y_{2n-2}
\end{equation}
is uniformly equivalent to a
\(\widehat g_{\rm ch}\)-orthonormal frame. This frame is called an ACH frame.

The metric coefficients, inverse metric coefficients, structure
coefficients, and all their ACH derivatives are uniformly bounded in
these local frames.  The Koszul formula therefore gives the same
boundedness for the connection coefficients of
\(\widehat g_{\rm ch}\).  Since the normalized normal and Reeb
derivatives contain a factor \(r_0\), while normalized horizontal
derivatives contain a factor \(\sqrt{r_0}\), ACH differentiation does
not lower the order of vanishing in \(r_0\).  Applying this observation
to \eqref{eq:metric-comparison} gives
\begin{equation}\label{eq:Q-estimate}
 \left|
 (\nabla^{\widehat g_{\rm ch}})^kQ
 \right|_{\widehat g_{\rm ch}}
 =
 O(r_0^{n+1})
 \qquad(k\geq0).
\end{equation}
After decreasing the collar size,
\(\widehat g\) and \(\widehat g_{\rm ch}\) are uniformly
quasi-isometric, and
\begin{equation}\label{eq:connection-comparison}
 \left|
 (\nabla^{\widehat g_{\rm ch}})^k
 \bigl(
   \nabla^{\widehat g}
   -\nabla^{\widehat g_{\rm ch}}
 \bigr)
 \right|_{\widehat g_{\rm ch}}
 =
 O(r_0^{n+1})
 \qquad(k\geq0).
\end{equation}

Throughout this section, the weighted Hölder norms are defined using
the reference metric \(\widehat g_{\rm ch}\).  Let
\(\xi_{\rm BH}\) denote the boundary defining function used in the
single-weight spaces of \cite[(47)]{BH14}.  Since \(r_0\) and
\(\xi_{\rm BH}\) define the same boundary face,
$
 r_0=a\,\xi_{\rm BH},
$
where \(a>0\), and \(a\), \(a^{-1}\), together with all their ACH
derivatives, are uniformly bounded on the closure of the half--ball
end.  Thus the corresponding single-weight spaces are equivalent.

Equation \eqref{eq:Q-estimate}, with one additional derivative used
to control the Hölder seminorm, gives
\[
 Q\in C^{k,\alpha}_{n+1}
 \qquad(k\geq0,\ 0<\alpha<1).
\]
The single to double-weight inclusion
\cite[Lemma~4.1]{BH14} therefore yields
\begin{equation}\label{eq:Q-double-weight}
 Q\in C^{k,\alpha}_{n+1,n+1}
 \qquad(k\geq0,\ 0<\alpha<1).
\end{equation}

The normalized model metric \(\widehat g_{\rm ch}\) is the
complex-hyperbolic metric in the normalization of \cite{BH14}.
Equation \eqref{eq:Q-double-weight} shows that \(\widehat g\) satisfies
the half--ball asymptotic assumptions used in
\cite[Section~8]{BH14}.  We may therefore use the special defining
function constructed in \cite[(68)]{BH14}:
\begin{equation}\label{eq:special-defining-function}
 x=e^{2\phi}u,
 \qquad
 \phi=O(\sqrt{uv}),
 \qquad
 \left|\frac{dx}{x}\right|_{\widehat g}=1.
\end{equation}
In the resulting geodesic coordinates,
\[
 \widehat g
 =
 \frac{dx\otimes dx}{x^2}+\widehat g_x.
\]
By \cite[(69)--(70)]{BH14}, the level hypersurfaces of \(x\) are
complete, their second fundamental forms and the required covariant
derivatives are uniformly bounded, and their principal curvatures
have strictly positive limits.  Thus the geometric hypotheses of the
Carleman estimate \cite[(71)]{BH14} are satisfied.

\subsection{The Carleman argument}
We use the nonnegative Laplacian
$
 P:=\Delta_{
 \widehat g
 }.
$ Then $\Delta_{g_\Omega}f=0$ is equivalent to $Pf=0$.

By \cite[(71), Section~8]{BH14}, there exist
\(x_0>0\), \(\lambda_0>0\), and \(C>0\) such that
\begin{equation}\label{eq:Carleman}
\begin{split}
 \int_{0<x<x_0}|P\psi|^2x^{-\lambda}\,dV_{\widehat g}
 \geq C\int_{0<x<x_0}
 \bigl(
   \lambda^3|\psi|^2
   +\lambda|\nabla^{\widehat g}\psi|_{\widehat g}^2
   +\lambda^{-1}|(\nabla^{\widehat g})^2\psi|_{\widehat g}^2
 \bigr)
 x^{-\lambda}\,dV_{\widehat g}
\end{split}
\end{equation}
for every \(\lambda\geq\lambda_0\) and every admissible \(\psi\).
As explained immediately after \cite[(71)]{BH14}, the slices are
noncompact but complete, and the required \(O(v)\) decay of \(\psi\)
and its derivatives justifies the integrations by parts at their
noncompact ends.

We next verify this decay for \(f\).  Since any two smooth defining
functions are equivalent on the boundary patch, the hypothesis
\(f=O(\rho^\infty)\) implies
\[
 f=O(r_0^N)
 \qquad\text{for every }N.
\]
The ACH frame derivatives preserve arbitrary-order vanishing, and
the bounded connection coefficients give
\[
 \left|
 (\nabla^{\widehat g_{\rm ch}})^k f
 \right|_{\widehat g_{\rm ch}}
 \leq C_{k,N}r_0^N
 \qquad\text{for all }k,N.
\]
Equations \eqref{eq:Q-estimate} and
\eqref{eq:connection-comparison} give the corresponding estimate for
\(\widehat g\):
\begin{equation}\label{eq:ACH-flatness}
 \left|
 (\nabla^{\widehat g})^k f
 \right|_{\widehat g}
 \leq C_{k,N}r_0^N
 \qquad\text{for all }k,N.
\end{equation}
Consequently,
\[
 f\in C^{k,\alpha}_N
 \qquad\text{for every }k,N,
\]
and Lemma~4.1 of \cite{BH14} yields
\begin{equation}\label{eq:double-flatness}
 f\in C^{k,\alpha}_{N,N}
 \qquad\text{for every }k,N.
\end{equation}

For each fixed \(\lambda\), choose \(N\) sufficiently large.
The arbitrary order double-weighted decay in
\eqref{eq:double-flatness} then dominates both the factor
\(x^{-\lambda}\) and the volume growth at the noncompact ends of the
slices.  Hence
\begin{equation}\label{eq:weighted-L2}
 x^{-\lambda/2}
 (\nabla^{\widehat g})^j f
 \in L^2(dV_{\widehat g}),
 \qquad
 0\leq j\leq2.
\end{equation}

Choose $ 0<a_1<a<b<x_0$
and \(\chi=\chi(x)\in C^\infty\) such that
\[
 \chi=1\quad\text{for }x\leq a,
 \qquad
 \chi=0\quad\text{for }x\geq b.
\]
Let \(\eta\in C^\infty([0,\infty))\) satisfy
\[
 \eta(t)=0\quad(t\leq1),
 \qquad
 \eta(t)=1\quad(t\geq2),
\]
and set
\[
 \eta_\varepsilon(x):=\eta(x/\varepsilon),
 \qquad
 f_\varepsilon:=\eta_\varepsilon\chi f.
\]
The function \(f_\varepsilon\) is supported away from \(x=0\) and
\(x=x_0\), while \eqref{eq:double-flatness} gives the decay required
at the noncompact ends.  Thus \(f_\varepsilon\) belongs to the
admissible class in \eqref{eq:Carleman}.

On the other hand, we have
\begin{equation}\label{eq:commutator}
 [P,\zeta]h
 =
 (P\zeta)h
 -2\langle d\zeta,dh\rangle_{\widehat g}.
\end{equation}
The derivatives of \(\eta_\varepsilon\) are supported in
$
 \{\varepsilon\leq x\leq2\varepsilon\}.
$
Since
$
 \left|\frac{dx}{x}\right|_{\widehat g}=1,
$
the first two normalized derivatives of
\(\eta(x/\varepsilon)\) are bounded independently of
\(\varepsilon\).  It follows from \eqref{eq:weighted-L2} that, for
each fixed \(\lambda\),
\begin{equation}\label{eq:cutoff-error}
 \left\|
 x^{-\lambda/2}
 [P,\eta_\varepsilon](\chi f)
 \right\|_{L^2(dV_{\widehat g})}
 \longrightarrow0
 \qquad(\varepsilon\to0).
\end{equation}
The same argument shows that
$
 f_\varepsilon\longrightarrow\chi f
$
in the weighted Sobolev norm occurring in
\eqref{eq:Carleman}.  We may therefore let
\(\varepsilon\to0\), with \(\lambda\) fixed, and obtain
\eqref{eq:Carleman} for \(\psi=\chi f\).

Since \(Pf=0\), then
$P(\chi f)=[P,\chi]f$ 
and the right-hand side is supported in
\(\{a\leq x\leq b\}\).  Dropping the nonnegative derivative terms in
\eqref{eq:Carleman} gives
\[
 C\lambda^3a_1^{-\lambda}
 \int_{\{x<a_1\}}|f|^2\,dV_{\widehat g}
 \leq
 a^{-\lambda}
 \int_{\{a\leq x\leq b\}}
 |[P,\chi]f|^2\,dV_{\widehat g}.
\]
The last integral is finite by
\eqref{eq:double-flatness} and is independent of \(\lambda\).  Thus
\[
 \int_{\{x<a_1\}}|f|^2\,dV_{\widehat g}
 \leq
 C^{-1}A_\chi\lambda^{-3}
 \left(\frac{a_1}{a}\right)^\lambda
\]
for some \(A_\chi<\infty\).  Letting \(\lambda\to\infty\) yields
\[
 f=0
 \qquad\text{on }\{x<a_1\}\subset E_{p,\varepsilon}.
\]
This proves Theorem~\ref{thm:local-unique-continuation}.

\subsection{Proof of Theorem~\ref{main}}

Let \(\Omega_p^\ast\) denote the connected component of
\(\Omega^\ast\) containing a one-sided neighborhood of \(p\), and set
\(f=\log(J_\Omega/c_n)\). Then \(f\) is real analytic on
\(\Omega_p^\ast\), satisfies \(\Delta_{\widehat g}f=0\), and, by
Theorem~\ref{9-16-lem1}, satisfies \(f=O(\rho^\infty)\) near \(p\).
Theorem~\ref{thm:local-unique-continuation} implies that \(f\) vanishes
on a nonempty open subset of \(\Omega_p^\ast\). The identity principle
for real-analytic functions therefore gives \(f\equiv0\) on
\(\Omega_p^\ast\). Consequently, \(J_\Omega\equiv c_n\) on
\(\Omega_p^\ast\).

Since \(J_\Omega\) is real analytic on
\(\Omega\setminus\{z\in\Omega:K_\Omega(z,z)=0\}\), and this set is
connected, the identity principle again yields
\(J_\Omega\equiv c_n\) on
\(\Omega\setminus\{z\in\Omega:K_\Omega(z,z)=0\}\). It follows that the
Bergman metric is well defined on this set. Hence,
\(\Omega^\ast=\Omega\setminus\{z\in\Omega:K_\Omega(z,z)=0\}\), and the
Bergman metric is Einstein there. This completes the proof of
Theorem~\ref{main}.

\end{document}